\documentclass[11pt]{amsart}

\setlength{\topmargin}{0in}
\setlength{\textheight}{8.6in}
\setlength{\oddsidemargin}{0in}
\setlength{\evensidemargin}{0in}
\setlength{\textwidth}{6.5in}

\usepackage{amsmath,amssymb,amsthm,graphics,amscd}
\usepackage{longtable}
\usepackage{cite,xypic}
\usepackage{color}
\usepackage{graphicx}
\usepackage{epsf}
\usepackage{fancyhdr,hyperref}
\usepackage{lscape}
\setlength{\unitlength}{1mm}
\hypersetup{backref,
colorlinks=true,backref}


\setcounter{tocdepth}{4}
\setcounter{secnumdepth}{3}

\setlongtables



\DeclareMathOperator{\Hom}{Hom}
\DeclareMathOperator{\im}{im}
\DeclareMathOperator{\Ext}{Ext}

\DeclareMathOperator{\soc}{Soc}

\newcommand{\Z}{\mathbb{Z}}

\newcommand{\id}{\mathrm{id}}

\newcommand{\SL}{\mathrm{SL}}

\newcommand{\fit}{\phantom{\Big(}}

\begin{document}

\newcounter{rownum}
\setcounter{rownum}{0}

\newtheorem{lemma}{Lemma}[section]
\newtheorem{theorem}[lemma]{Theorem}
\newtheorem*{theorem*}{Theorem}
\newtheorem*{claim}{Claim}
\newtheorem{cor}[lemma]{Corollary}
\newtheorem{conjecture}[lemma]{Conjecture}
\newtheorem*{conjecture*}{Conjecture}
\newtheorem*{prop*}{Proposition}
\newtheorem{prop}[lemma]{Proposition}
\theoremstyle{remark}
\newtheorem{remark}[lemma]{Remark}
\newtheorem{obs}[lemma]{Observation}
\theoremstyle{definition}
\newtheorem{defn}[lemma]{Definition}

  \def\hal{\unskip\nobreak\hfil\penalty50\hskip10pt\hbox{}\nobreak
  \hfill\vrule height 5pt width 6pt depth 1pt\par\vskip 2mm}

\newenvironment{changemargin}[1]{%
  \begin{list}{}{%
    \setlength{\topsep}{0pt}%
    \setlength{\topmargin}{#1}%
    \setlength{\listparindent}{\parindent}%
    \setlength{\itemindent}{\parindent}%
    \setlength{\parsep}{\parskip}%
  }%
  \item[]}{\end{list}}

\parindent=0pt
\addtolength{\parskip}{0.5\baselineskip}

\title{Stabilisation of the LHS spectral sequence for algebraic groups}
\author{Alison E. Parker}
\address{School of Mathematics\\
University of Leeds\\
Leeds, LS2 9JT, UK\\
}
\email{a.e.parker@leeds.ac.uk {\text{\rm(Parker)}}}
\author{David I. Stewart}
\address{New College, Oxford\\ Oxford, UK}
\email{david.stewart@new.ox.ac.uk {\text{\rm(Stewart)}}}

\pagestyle{plain}
\begin{abstract}
In this note, we consider the Lyndon--Hochschild--Serre spectral sequence corresponding to the first Frobenius kernel of an algebraic group $G$ and computing the extensions between simple $G$-modules. We state and discuss a conjecture that $E_2=E_\infty$ and provide general conditions for low-dimensional terms on the $E_2$-page to be the same as the corresponding terms on the $E_{\infty}$-page, i.e. its abutment. 
\end{abstract}
\maketitle

\section{Introduction}Let $G$ be a reductive algebraic group over an
algebraically closed field $k$ of characteristic $p>0$. Let $\lambda$
and $\mu$ be two dominant weights for $G$. This paper concerns the
representation theory of $G$ and its first Frobenius kernel $G_1$; we refer to \cite{Jan03} for
notation. It is the purpose of this short note to state and provide some
evidence towards the following conjecture.

\begin{conjecture*}Suppose all $G_1$-injective hulls have the structure of $G$-modules, for instance if $p\geq 2h-2$. Then the
  Lyndon--Hochschild--Serre spectral
  sequence \[E_2^{ij}=\Ext^i_{G/G_1}(k,\Ext^j_{G_1}(L(\lambda),L(\mu)))\Rightarrow
  \Ext^{i+j}_G(L(\lambda),L(\mu))\tag{*}\] stabilises (i.e. reaches
  its abutment) at the $E_2$-page. That is, $E_2^{ij}\cong
  E_\infty^{ij}$ for all $i,j$.

Hence \[\Ext^n_G(L(\lambda),L(\mu))\cong
\bigoplus_{i+j=n}\Ext^i_{G/G_1}(k,\Ext^j_{G_1}(L(\lambda),L(\mu))).\]\end{conjecture*}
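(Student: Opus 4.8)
The plan is to isolate the degeneration statement $E_2^{ij}\cong E_\infty^{ij}$ as the whole content, since the displayed decomposition is then automatic: the abutment $\Ext^n_G(L(\lambda),L(\mu))$ carries a finite filtration whose subquotients are the $E_\infty^{ij}$ with $i+j=n$, and because every object in sight is a $k$-vector space this filtration splits. Hence $\Ext^n_G(L(\lambda),L(\mu))\cong\bigoplus_{i+j=n}E_\infty^{ij}\cong\bigoplus_{i+j=n}E_2^{ij}$, and it remains only to prove that (*) collapses.

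First I would reduce to restricted weights. Writing $\lambda=\lambda_0+p\lambda_1$ and $\mu=\mu_0+p\mu_1$ with $\lambda_0,\mu_0$ restricted, the Steinberg tensor product theorem together with the triviality of $L(\lambda_1)^{(1)}$ and $L(\mu_1)^{(1)}$ on $G_1$ gives, as $G/G_1$-modules,
\[\Ext^j_{G_1}(L(\lambda),L(\mu))\cong\Ext^j_{G_1}(L(\lambda_0),L(\mu_0))\otimes\bigl(L(\lambda_1)^\ast\otimes L(\mu_1)\bigr)^{(1)}.\]
Untwisting and feeding this into the $E_2$-term converts $\Ext^i_{G/G_1}(k,-)$ into $\Ext^i_G\bigl(L(\lambda_1),\Ext^j_{G_1}(L(\lambda_0),L(\mu_0))^{(-1)}\otimes L(\mu_1)\bigr)$. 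The engine of the argument is therefore the restricted modules $N^j:=\Ext^j_{G_1}(L(\lambda_0),L(\mu_0))^{(-1)}$, regarded as $G$-modules.

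Next, using the standing hypothesis that $G_1$-injective hulls lift to $G$ (so for $p\ge 2h-2$), I would build a resolution $0\to L(\mu_0)\to Q^\bullet$ by $G$-modules that are injective on restriction to $G_1$, by iterating $G_1$-injective hulls of the successive $G$-module cokernels. Then $I^\bullet:=\Hom_{G_1}(L(\lambda_0),Q^\bullet)$ is a complex of $G/G_1$-modules with $H^j(I^\bullet)\cong\Ext^j_{G_1}(L(\lambda_0),L(\mu_0))$. The key input I would invoke is that each $N^j$ lies in $\good$, i.e.\ has a good filtration; for $p\ge 2h-2$ this is the kind of statement furnished by the good-filtration theorems for cohomology of the Frobenius kernel (Andersen--Jantzen, Kumar--Lauritzen--Thomsen), reflecting that $\bigoplus_jN^j$ behaves like a module of sections over the nilpotent cone $\N$ and that $k[\N]\in\good$ since $H^{2\bullet}(G_1,k)\cong k[\N]^{(1)}$. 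Granting this, $\Ext^{>0}_G(k,N^j)=0$, so in the restricted case $E_2^{ij}=0$ for $i>0$: the sequence is concentrated in the single column $i=0$ and collapses for trivial reasons.

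The main obstacle is to propagate this collapse to arbitrary $\lambda,\mu$. Tensoring by $L(\lambda_1)^\ast\otimes L(\mu_1)$ destroys the good filtration, so the general $E_2$-page genuinely occupies several columns and concentration in column $0$ is unavailable; what one really needs is the stronger formality $I^\bullet\simeq\bigoplus_jN^j[-j]$ in $D^+(G/G_1)$, a property preserved by $-\otimes(L(\lambda_1)^\ast\otimes L(\mu_1))^{(1)}$ and by $R\Hom_{G/G_1}(k,-)$, and which yields $E_2=E_\infty$ in complete generality. Formality is governed by the higher $k$-invariants living in $\Ext^{\ge2}_{G/G_1}(N^{j'},N^{j})$ for $j'>j$, and I do not expect these to vanish for purely formal reasons --- this is precisely why the full statement remains conjectural. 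My concrete plan for the partial, low-degree results is therefore to bound these obstruction classes by a grading argument: the action of $H^{2\bullet}(G_1,k)\cong k[\N]^{(1)}$ equips each $N^j$ with an internal degree tied to $j$, while the Frobenius twist scales the pertinent weights by $p$; comparing the two gradings should force every differential $d_r$ into or out of the low-degree terms to vanish in a bounded range of total degree $i+j$, which suffices to identify $E_2^{ij}$ with $E_\infty^{ij}$ there.
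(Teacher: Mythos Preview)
The statement under review is a \emph{conjecture}; the paper does not prove it in full but offers partial evidence via its main Theorem. Your proposal has a genuine gap at its central step. You assert that for restricted $\lambda_0,\mu_0$ and $p\ge 2h-2$, each $N^j=\Ext^j_{G_1}(L(\lambda_0),L(\mu_0))^{(-1)}$ lies in $\good$, citing Andersen--Jantzen and Kumar--Lauritzen--Thomsen. Those results concern cohomology of induced modules or of line bundles on $G/B$, not $\Ext$ between \emph{simple} $G_1$-modules; the good-filtration property for $N^j$ is not known under $p\ge 2h-2$ alone. As the paper itself records in the introduction, it is only established when $\lambda_0,\mu_0$ are in addition $p$-regular and the Lusztig Character Formula holds --- and under those extra hypotheses the conjecture is already known to hold. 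So your restricted-case collapse does not go through in the stated generality, and the subsequent reduction to formality (which you rightly flag as open) has nothing firm to build on.

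For the partial, low-degree results, your grading sketch is quite different from the paper's method and too vague to assess precisely. The paper instead proves a Proposition constructing a $G$-lift of a \emph{minimal} $G_1$-injective resolution of $L(\mu_0)$ out to stage $r$ when $p\ge(r+1)(h-1)$; minimality forces the induced differentials in $\Hom_{G_1}(L(\lambda),I_\bullet\otimes L(\mu_1)^F)$ to vanish through degree $r$, and a derived-couple argument (together with Donkin's vanishing of $d_2^{m,1}$) then yields $E_2^{mn}=E_\infty^{mn}$ for $m\le r-1$ and the decomposition of $\Ext^i_G$ for $i\le r+1$.
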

Note that it is an open conjecture of Humphreys and Verma that all $G_1$-injective hulls do indeed have the structure of $G$-modules, possibly making the first hypothesis trivially satisfied. 

Let us underline the fact that we are unaware of any occasion where any differential in the spectral sequence (*) is known to be non-zero---even after replacing $G$ with an arbitrary connected algebraic group and replacing $L(\lambda)$ and $L(\mu)$ by arbitrary $G$-modules. Showing that certain differentials in the
spectral sequence are zero has some history; we pick out a few
cases. For a large class of naturally occurring modules $V$ and $W$,
it was shown in \cite{Par07} that when $G=\SL_2$ the spectral sequence
does stabilise at the $E_2$-page. In particular the conjecture is
confirmed for the case $G=SL_2$, with no condition on $p$. It was
shown by Donkin in \cite{Don82} that the differentials
$d_{m,1}:E_2^{m,1}\to E_2^{m+2,0}$ are zero, also with no condition on
$p$. Some other special cases involving maps needed to compute second
cohomology were considered in work of McNinch \cite{McN02}, the second
author \cite{SteSL2,SteSL3}, and Ibraev \cite{Ibr11,Ibr12}.

Another case in which the conjecture is true is if $\lambda$ and $\mu$ are $p$-regular restricted weights, $p\geq 2h-2$ and $p$ is large enough that the Lusztig Character Formula holds. Then \cite[Theorem 5.3]{PS13} shows that the $G$-module $\Ext^n_{G_1}(L(\lambda),L(\mu))^{[-1]}$ has a good filtration for each $n$. Under these circumstances the spectral sequence moreover degenerates to a line; in particular the conjecture is true.

Note that the conjecture is not true if $G$ is replaced by an arbitrary group. See
\cite[\S6]{BF94}, \cite{Lea93} and \cite{Sie00} for examples of non-zero differentials.

The main theorem of this paper is a confirmation of the conjecture in
a generic sense. Here, the vanishing of differentials of degree much
lower than $p$ is guaranteed.
\begin{theorem*} Suppose $p\geq (r+1)(h-1)$. Then the differentials
  $d_n^{ij}$ in the spectral sequence (*) satisfying $i\le
  r-1$ and
  $n\geq 2$  or $j=0$ and $n \ge 2$ or $j=1$ and $n \ge 2$ are all zero.

In particular, \[\Ext^i_G(L(\mu),L(\lambda))\cong \bigoplus_{j=0}^i E_2^{i-j,j}\]
for $i \le r+1$.\end{theorem*}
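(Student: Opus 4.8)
The plan is to split the three families of differentials and dispatch two of them immediately. The family ``$j=0$, $n\ge 2$'' is vacuous, since $d_n^{i,0}$ has target $E_n^{i+n,1-n}$, which lies in a negative row and hence is zero. The family ``$j=1$, $n\ge 2$'' reduces to Donkin's theorem: for $n\ge 3$ the target $E_n^{i+n,2-n}$ again lies in a negative row, so the only map in question is $d_2^{i,1}\colon E_2^{i,1}\to E_2^{i+2,0}$, and this is exactly the differential shown to vanish in \cite{Don82}. So everything rests on the first family: I must show $d_n^{ij}=0$ whenever $i\le r-1$ and $n\ge 2$, with no restriction on $j$.

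To handle this I would first rewrite the $E_2$-page. Since $p\ge (r+1)(h-1)\ge 2h-2$, all $G_1$-injective hulls carry a $G$-structure, so $M_j:=\Ext^j_{G_1}(L(\lambda),L(\mu))$ is a $G/G_1$-module; identifying $G/G_1\cong G$ through the Frobenius and untwisting, $M_j\cong N_j^{[1]}$ with $N_j=M_j^{[-1]}$ a genuine $G$-module, so that $E_2^{ij}=\Ext^i_{G/G_1}(k,M_j)\cong\Ext^i_G(k,N_j)=H^i(G,N_j)$. I would then assemble two structural inputs. First, a linear weight bound: every weight of $N_j$ is at most $\tfrac{j}{2}\tilde\alpha$ up to a bounded correction, coming from the fact that $\Ext^\bullet_{G_1}(L(\lambda),L(\mu))^{[-1]}$ is a finitely generated graded module over $H^\bullet(G_1,k)^{[-1]}\cong k[\N]$, the degree-one generators being linear functions on the nilpotent cone $\N\subset\g$ of highest weight $\tilde\alpha$, with module generators concentrated in a bounded range of cohomological degrees. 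Second, the strong linkage principle together with $H^{>0}(G,k)=0$: for $i\ge 1$, $H^i(G,N_j)$ sees only those composition factors $L(\nu)$ of $N_j$ with $\nu\ne 0$ linked to $0$, and every such dominant $\nu$ has size at least that of $(p-h+1)\tilde\alpha$, i.e.\ of order $2p$.

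Comparing these two inputs is the crux. A nonzero composition factor of $N_j$ linked to $0$ forces $\tfrac{j}{2}\tilde\alpha$ to dominate a weight of order $p$, so $H^i(G,N_j)\ne 0$ with $i\ge 1$ cannot occur until $j$ is comparable to $p/(h-1)$; carrying out this counting so that the hypothesis $p\ge(r+1)(h-1)$ is \emph{exactly} what is required is the delicate point, since one must also invoke the upper bound on $\nu$ forced by $i\le r-1$ (only weights reachable within $r-1$ linkage steps can support $H^{\le r-1}$) in order to locate where the source and target of a given $d_n^{ij}$ could both be nonzero and then exclude this. Granting the vanishing of these differentials, the stated isomorphism is pure bookkeeping: for $(a,b)$ with $a+b\le r+1$, any differential out of $(a,b)$ has $(a,b)\in\{a\le r-1\}\cup\{b=0\}\cup\{b=1\}$ (if $a\ge r$ then $b\le 1$), and any differential into $(a,b)$, being $d_n^{a-n,b+n-1}$ with $n\ge 2$, has source column $a-n\le r-1$; all of these lie in the three families just shown to vanish. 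Hence $E_2^{ab}=E_\infty^{ab}$ throughout $a+b\le r+1$, and $\Ext^i_G(L(\mu),L(\lambda))$, carrying a filtration with these graded pieces, is isomorphic as a vector space to $\bigoplus_{j=0}^i E_2^{i-j,j}$ for $i\le r+1$.
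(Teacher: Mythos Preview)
Your handling of the two easy families is fine and matches the paper: $j=0$ is vacuous by first-quadrant reasons, and $j=1$ reduces to Donkin's theorem. Your final bookkeeping paragraph, deducing the direct-sum description from the vanishing of all three families, is also correct. The problem is entirely in your treatment of the family $i\le r-1$.

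Your proposed argument tries to control the $E_2$-terms themselves via a weight bound on $N_j=\Ext^j_{G_1}(L(\lambda),L(\mu))^{[-1]}$ together with linkage. This does not work, for two independent reasons. First, the assertion that ``only weights reachable within $r-1$ linkage steps can support $H^{\le r-1}$'' is false: already for $\SL_2$ one has $H^1(G,L(\nu))\ne 0$ for dominant $\nu$ of arbitrarily large size (e.g.\ $\nu=2p^k-2$), so a bound on $i$ gives no upper bound on the weights that can contribute to $H^i(G,N_j)$. Second, even if you could prove $E_2^{ij}=0$ for all $1\le i\le r-1$ and all $j$, this would still not yield the theorem. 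It would kill $d_n^{0,j}$ only for $2\le n\le r-1$ (target column $n$ lies in the vanished range), but the theorem also asserts $d_n^{0,j}=0$ for every $n\ge r$, where the target sits in column $n\ge r$ and your vanishing says nothing. In short, an argument that only shows certain $E_2$-\emph{terms} vanish cannot establish that a differential between two genuinely nonzero terms is zero, and such differentials are precisely what must be excluded here.

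The paper's proof is of a completely different nature and does not proceed via weight estimates on the $E_2$-page at all. It first proves a Proposition: under the hypothesis $p\ge (r+1)(h-1)$ one can build a \emph{minimal} $G_1$-injective resolution $0\to L(\mu_0)\to I_0\to\cdots\to I_r\to\cdots$ of the restricted factor $L(\mu_0)$ which carries a compatible $G$-module structure through the term $I_r$. Using this resolution (tensored with $L(\mu_1)^{[1]}$) to build the $E_0$-page of the LHS spectral sequence, minimality forces $\Hom_{G_1}(L(\lambda),I_n\otimes L(\mu_1)^{[1]})\cong\Ext^n_{G_1}(L(\lambda),L(\mu))$ for $n\le r$, so the corresponding $d_0$-differentials are identically zero in that range. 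A derived-couple argument (using \cite[Lemma~2.1]{Par07}) then propagates this: $d_0=0$ in that range forces the maps $k_l$ in the exact couple to vanish for $l\ge 2$ on columns $\le r-1$, and since $d_l=j_l\circ k_l$ one gets $d_l^{ij}=0$ for all $l\ge 2$ and $i\le r-1$, with no restriction on $j$ or on $l$. This is a structural vanishing of the differentials, not a vanishing of the terms, and it is what your argument is missing.
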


We prove the above theorem by applying techniques from
\cite{Par07}. First, we show, in a proposition, that  part of a
minimal $G_1$-injective resolution has a compatible $G$-structure. We
then reconstruct the spectral sequence (*) in such a way that the
bottom-most complex in the double complex giving the $E_0$-page
contains this part of a minimal $G_1$-injective resolution. It follows
that many maps in the $E_0$-page are zero. Then some derived couple
arguments prove the theorem.

\section{Proposition and proof of the theorem}

In the proposition below, note that the case $r=0$ would be a special case of the Humphreys--Verma conjecture. (It is not known if the bound $p\geq 2h-2$ could be reduced to $p\geq h-1$ for $G_1$-injective hulls to lift to $G$-modules.)

\begin{prop*}Let $r\geq 1$ and let $\mu\in X_1$. Provided
  $p\geq (r+1)(h-1)$, there is a minimal $G_1$-resolution \[0\to
  L(\mu)\to I_0\to I_1\to \cdots \to I_r\to \cdots\] such that the
  sequence up to term $I_r$ has a
  $G$-structure.\end{prop*}

\begin{proof} We prove \emph{a fortiori} that there is such a sequence of $G$-modules with $I_r$ having weights $\lambda=\lambda_0+p\lambda_1$ with $\lambda_0\in X_1$, which satisfy $(\lambda_1,\alpha_0^\vee)\leq (r+1)(h-1)$.

First, let us treat the case $r=1$. Set $I_0=Q_1(\mu)$. The hypotheses imply that $p\geq 2h-2$; thus we know that $Q_1(\mu)$ has the structure of a $G$-module. The injection $L(\mu)\to Q_1(\mu)$ is then a map of $G$-modules. 

Let $M:=Q_1(\mu)/L(\mu)$. We may write $\soc_{G_1}M=\bigoplus_\nu L(\nu_0)\otimes M_\nu^F$ where $\nu_0\in X_1$ and $M_\nu$ is some $G$-module. Set $I_1=\bigoplus_{\nu}Q_1(\nu_0)\otimes M_\nu^F$. So $\soc_{G_1} I_1=\soc_{G_1} Q_1(\mu)/L(\mu)$. (It is worth noting that the condition on the weights here is enough to ensure that $\soc_{G_1}M=\soc_G M$ but we do not need this fact explicitly.) Thus $I_1$ is the $G_1$-injective hull of $M$, hence if there is a $G$-map $I_0\to I_1$, this will be part of a minimal resolution. It remains to show that there is indeed a map $I_0\to I_1$ of $G$-modules whose kernel is $L(\mu)$, i.e. a map $I_0/L(\mu)\to I_1$. Note that we do have a map $\soc_{G_1} M\to I_1$ by construction, so consider the exact sequence
\[\tag{*}\Hom_G(M,I_1)\to \Hom_G(\soc_{G_1} M,I_1) \to \Ext_G^1(M/\soc_{G_1} M,I_1).\]

If we could show that the third term in this sequence is zero then we would have that the first map were surjective, hence the $G$-map $\soc_{G_1} M\to I_1$ would lift to a map $M=I_0/L(\mu)\to I_1$ and we would be done.

Now $\Ext_G^1(M/\soc_{G_1} M,I_1)$ has a filtration by spaces $E=\Ext_G^1(M/\soc_{G_1} M,Q_1(\nu_0)\otimes L(\nu_1)^F)$ over certain weights $\nu=\nu_0+p\nu_1$. And $E$ can be computed via the 5-term exact sequence of the LHS spectral sequence, of which part is
\begin{align*}\Ext^1_{G/G_1}(k,\Hom_{G_1}&(M/\soc_{G_1} M,Q_1(\nu_0)\otimes L(\nu_1)^F))\to E\\&\to \Hom_{G/G_1}(k,\Ext^1_{G_1}(M/\soc_{G_1} M,Q_1(\nu_0)\otimes L(\nu_1)^F)).\end{align*}
Now the third term here is zero, as $Q_1(\nu_0)$ is injective for $G_1$, hence, to show $E=0$, it suffices to show that the first term is zero.

Now $I_0=Q_1(\mu)$, being a direct $G_1$-summand of $St_1\otimes L((p-1)\rho-\mu)$, has weights satisfying $\xi=\xi_0+p\xi_1$ with $(\xi_1,\alpha_0^\vee)\leq h-1$. Thus the composition factors of $I_0$ (hence of $M$) are of the form $L(\xi_0)\otimes L(\xi_1)^F$. In particular, we have that the weights $\nu_1$ satisfy $(\nu_1,\alpha_0^\vee)\leq h-1$. Thus $(\xi_1+\rho,\alpha_0^\vee),(\nu_1+\rho,\alpha_0^\vee)\leq 2h-2$ and our condition on $p$ implies that they are both in the closure of the lowest alcove, $\bar C_\Z$.  So let $L(\xi_0)\otimes L(\xi_1)^F$ be a composition factor of $M/\soc_{G_1} M$. We compute:
\begin{align*}\Ext^1_{G/G_1}(k,\Hom_{G_1}(L(\xi_0)\otimes L(\xi_1)^F,&Q_1(\nu_0)\otimes L(\nu_1)^F))\\&\cong\Ext^1_{G}(L(\xi_1),\Hom_{G_1}(L(\xi_0),Q_1(\nu_0))^{[-1]}\otimes L(\nu_1))\end{align*}
Now $\Hom_{G_1}(L(\xi_0),Q_1(\nu_0))$ is non-zero, thence equal to $k$, if an only $\xi_0=\nu_0$; in that case, the term on the right becomes $\Ext^1_{G}(L(\xi_1), L(\nu_1))$, and since $\xi_1,\nu_1\in \bar C_\Z$, this vanishes by the linkage principle. This concludes the proof in case $r=1$.

Now by induction we may assume that we have a sequence of
$G$-modules \[0\to I_0\to \cdots\to I_{r-2}\stackrel{\pi}{\to} I_{r-1},\] which is minimal as an injective $G_1$-resolution, such that the composition factors of $I_{r-1}$ have high weights $\lambda$ satisfying $\lambda=\lambda_0+p\lambda_1$ with $\lambda_0\in X_1$ and $(\lambda_1,\alpha_0^\vee)\leq r(h-1)$. We construct $I_r$ in a similar way to before: set $I_r=\bigoplus_{\nu}Q_1(\nu_0)\otimes M_\nu^F$, where the sum is over the $G$-composition factors of $\soc_{G_1}I_{r-1}/\pi I_{r-2}$, where $\nu_0\in X_1$ and a weight $\nu_1$ of $M_\nu$ satisfies $(\nu_1,\alpha_0^\vee)\leq r(h-1)$.  Thus a weight $\xi$ of $I_r$, say $\xi_0+p\xi_1$ with $\xi_0\in X_1$ satisfies $(\xi_1,\alpha_0^\vee)\leq (\nu_1,\alpha_0^\vee)+(\rho,\alpha_0^\vee)=(r+1)(h-1)$ as required. Note that $I_r$ is again a $G_1$-injective hull of $I_{r-1}/\im \pi$ so if we can show there is a $G$-module map $I_{r-1}\to I_r$ with kernel $\im\pi$, we will be done.

Of course, it is equivalent to produce a map from $M:=I_{r-1}/\im\pi$ to $I_r$. By construction we do have a map from $\soc_{G_1} M\to I_r$. Now the same argument as before shows that the third term in the sequence (*) (with $I_r$ replacing $I_1$) is zero. This completes the proof.

\end{proof}

\begin{proof}[Proof of the theorem]
We write $L(\mu) =
L(\mu_0)\otimes L(\mu_1)^F$ using Steinberg's tensor
product theorem where $\mu_0 \in X_1$ and $\mu_1 \in X^+$.
 Using the proposition we
have a $G$-resolution which is also a $G_1$-injective
resolution:
$$0 \to L(\mu_0) \to I_{0} \to I_{1} \to \cdots \to I_r\to\cdots,$$
where, up to $I_r$, the resolution is minimal for $G_1$.

We denote the differentials by $\delta_i:I_i \to I_{i+1}$ and the
kernels by 
$K_i : = \ker{\delta_i}$.
Dimension shifting gives us 
$\Ext^i_{G_1}(L(\lambda_0), L(\mu_0)) \cong 
\Ext^1_{G_1}(L(\lambda_0), K_{i-1})  $.
Minimality gives us for $\mu_1 \in X_1$ that 
$\Ext^i_{G_1}(L(\lambda_0), L(\mu_0)) \cong 
\Hom_{G_1}(L(\lambda_0), K_{i}) \cong
\Hom_{G_1}(L(\lambda_0), I_{i})$ for $i\leq r$.

We now have a $G$-resolution:
$$0 \to L(\mu) \to I_{0}\otimes L(\mu_1)^F
\stackrel{\partial_0}{\to} I_{1}
\otimes L(\mu_1)^{F}\stackrel{\partial_1}{\to} \cdots $$
where $\partial_i = \delta_i\otimes\id$,
as tensoring is exact. Also note that such a resolution stays
injective as a $G_1$-resolution as $L(\mu_1)^{F}$ is trivial as
  a $G_1$-module.

Now consider the $E_0$-page of the LHS spectral sequence that 
converges to  
$\Ext^*_G(L(\lambda),L(\mu))$
as constructed in \cite[\S2]{Par07}
$$E_0^{mn} = \Hom_{G/G_1}(k, \Hom_{G_1}(L(\lambda),I_n\otimes
L(\mu_1)^{F})\otimes J_m^F)
$$
where we have a $G$-injective resolution of the trivial module:
$$ 0 \to k \to J_0 \to J_1 \to \cdots $$
and this spectral sequence has $E_1$ and $E_2$ page 
\begin{align*}
E_1^{mn}&= \Hom_{G/G_1}(k, \Ext^n_{G_1}(L(\lambda),L(\mu))\otimes J_m^F)\\
E_2^{mn}&= H^m({G/G_1}, \Ext^n_{G_1}(L(\lambda),L(\mu))).
\end{align*}

Consider the 
induced maps $\partial^*_m$ in the
following complex, which has homology
$\Ext^*_{G_1}(L(\lambda),L(\mu))$:
$$
\Hom_{G_1}(L(\lambda), I_0\otimes L(\lambda_1)^{F}) \xrightarrow{\partial_0^*}
\Hom_{G_1}(L(\lambda), I_1\otimes L(\mu_1)^{F}) \xrightarrow{\partial_1^*}
\cdots.
$$
Now 
\begin{align*}
\Ext^m_{G_1}(L(\lambda),L(\mu))
&\cong
\Ext^m_{G_1}(L(\lambda_0),L(\mu_0)) \otimes L(\mu_1)^{F}\otimes
L(\lambda_1^*)^{F}\\
&\cong
\Hom_{G_1}(L(\lambda_0), I_{m})\otimes L(\mu_1)^{F}\otimes
L(\lambda_1^*)^{F}
\cong
\Hom_{G_1}(L(\lambda),  I_{m}\otimes L(\mu_1)^{F})
\end{align*}
for $m \le r$.
Thus all the differentials $\partial^*_m$ for $m \le r$ must be zero.

Now by \cite[\S3.2, \S3.4]{bensonII} we
know that the spectral sequence can be constructed using  derived couples.
We have 
$$D_0^{mn}= \bigoplus_{m+n=e+f, \ e \ge m}E^{ef}_0$$
$$E_1^{mn}= H(E^{mn}_0, d_0)$$
$$D_1^{mn}= H(E^{mn}_0 \oplus E_0^{m+1,n-1}\oplus\cdots, d_0+d_1)$$

We define the higher derived couples by taking the derived couple of
the previous one.
We have an exact diagram of doubly graded $k$-modules
$$
\xymatrix@R=20pt@C=10pt{
D_l \ar@{->}[rr]^{i_l}& &D_l \ar@{->}[dl]^{j_l}\\
& E_l \ar@{->}[ul]^{k_l} &
}
$$
The derived couple (for $l \ge 1$) is defined by
\begin{align*}
D_{l+1}^{mn}&= \im{i_l^{m+1, n-1}}\subseteq D_l^{mn}\fit &
E_{l+1}^{mn}&= H(E^{mn}_l, d_l)\fit \\
i_{l+1}^{mn}&=i_l^{mn}\Bigl\vert_{D_{l+1}} &
j_{l+1}^{mn}(i_l^{m+1,n-1}(x))&= j_l^{m+1,n-1}(x) + \im(d_l)\fit  \\
k_{l+1}^{mn}(z +\im(d_l))&= k_l^{mn}(z)\fit &
d_{l+1} &= j_{l+1} \circ k_{l+1}\fit 
\end{align*}
And the degrees of the maps $k$, $j$ and $d$ are:
$$
\deg(i_n) = (-1,1), \qquad 
\deg(j_n) = (n-1,n+1), \qquad 
\deg(k_n) = (1,0). 
$$

Now using \cite[Lemma 2.1]{Par07} we have that
$
d_0^{mn} =0
$ implies that $k_2^{m-1,n+1} =0$.
Thus since $d_0^{mn} =0$ for $m \le r$ we have $k_2^{mn}=0$ for $m
\le r-1$.
Thus all $k_l^{mn} =0$ for all $l \ge 2$ and $m \le r-1$.
As $d_l^{mn} =   j_l^{m+1,n}\circ k_l^{mn}$ we also get
$d_l^{mn} =0$ for all $l \ge 2$ and $m \le r-1$.

In other words, as all these differentials are zero on the $E_2$ page and
remain zero, the terms $E_2^{mn}$ with $m \le r-1$ must already be the
stable value. That is, $E_{\infty}^{mn} = E_2^{mn}$ for $m \le r-1$.

This easily gives us that
$$\Ext_G^i(L(\lambda), L(\mu)) = \bigoplus_{j=0}^i E^{i-j, j}_2$$
for $i < r$.
To get the result for $i =r$, we note that all the terms in the sum
$$\bigoplus_{j=0}^r E^{r-j, j}_{\infty}$$
stabilise at the $E_2$ page by the above, except, possibly the term
$E_\infty^{r0}$. But here clearly 
$E_\infty^{r0} = E_2^{r0}$ as all incoming differentials are zero by
the above, and the leaving differential $d_l^{r,0}$ is always zero as our spectral sequence is first quadrant.

We may similarly argue for $r+1$.
We consider
$$\bigoplus_{j=0}^{r+1} E^{r+1-j, j}_{\infty}.$$
As before all terms except possibly 
$E_\infty^{r,1}$ and $E_\infty^{r+1,0}$ stabilise at the $E_2$ page.
The same argument as in the previous case gives $E_{\infty}^{r+1,0} =
E_2^{r+1,0}$.

Now note that all incoming differentials to $E_l^{r,1}$ are zero for
$l \ge 2$ by the above. We also have that $d_l^{r,1} = 0$ for $l \ge
3$, again since the spectral sequence is first quadrant. So we need only check
that
$d_2^{r1}=0$, but this is true using \cite[Main Theorem]{Don82}. Thus we also get the result for $r+1$.
\end{proof}

\subsection*{Acknowledgements} The authors wish to thank Len Scott and Dan Nakano for comments on a previous version of this paper.

\bibliographystyle{amsalpha}
\bibliography{bib}

\end{document}